\newtheorem{thm}{Theorem}[section]
\newtheorem{obe}[thm]{Remark}
\author{Fabio Silva Botelho\\ \\Department of Mathematics \\  Federal University of Santa Catarina, UFSC \\
Florian\'{o}polis, SC - Brazil}
\title{\bf  A generalization of the Ekeland variational principle}
\date{}
\begin{document}
\maketitle

\abstract{ In this short communication, we present a generalization of the Ekeland variational principle. The main result is established through
standard tools of functional analysis and calculus of variations. The novelty here is a result involving the second G\^{a}teaux variation of the functional in question.}

\section{Introduction}

In this article we present and prove a generalization of  the Ekeland variational principle. A proof of the so far known principle may be found in Giusti, \cite{906}, pages 160-161. With slight improvements a similar result is presented in \cite{120}.

We also highlight details on the function spaces addressed may be found in \cite{1}.

 At this point we state such a result.

\begin{thm}[Ekeland variational principle] Let $(U,d)$ be a complete metric space and let $F:U \rightarrow \overline{\mathbb{R}}\equiv \mathbb{R} \cup\{+\infty\}$ be a lower semi continuous
bounded below functional taking a finite value at some point.

Let $\varepsilon>0$.  Assume for some $u \in U$ we have
$$F(u) \leq \inf_{u \in U}\{F(u)\}+ \varepsilon.$$
Under such hypotheses, there exists $v \in U$ such that
\begin{enumerate}
\item\label{p11} $d(u,v) \leq 1$,
\item\label{p12} $F(v) \leq F(u)$,
\item\label{p13} $F(v) \leq F(w)+ \varepsilon d(v,w),\; \forall w \in U$.
\end{enumerate}
\end{thm}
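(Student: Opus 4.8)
The plan is to prove the Ekeland variational principle by constructing a sequence via an iterative selection procedure and showing it converges to the desired point $v$. This is the classical approach found in Giusti's treatment.

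The plan is to build, starting from the given point $u$, a sequence $\{u_n\}$ in $U$ that descends simultaneously in $F$-value and in a suitable partial order, and to take $v$ as its limit. First I would introduce the relation $a \preceq b$ defined by $F(a) + \varepsilon\, d(a,b) \leq F(b)$; the triangle inequality makes this a genuine partial order, and the sublevel sets $S_b = \{a \in U : a \preceq b\}$ are nonempty, since each contains $b$. Setting $u_0 = u$, I would then define $u_{n+1}$ recursively as any element of $S_{u_n}$ with $F(u_{n+1}) \leq \inf_{w \in S_{u_n}} F(w) + 2^{-n}$; this is possible because $F$ is bounded below, so the infimum is finite.

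Next I would establish convergence. Since $u_{n+1} \preceq u_n$ we have $\varepsilon\, d(u_n,u_{n+1}) \leq F(u_n) - F(u_{n+1})$, and summing this telescoping bound gives $\varepsilon \sum_{n=0}^{\infty} d(u_n,u_{n+1}) \leq F(u_0) - \inf_U F < +\infty$. Hence $\{u_n\}$ is Cauchy and, by completeness of $(U,d)$, converges to some $v \in U$. Properties \ref{p11} and \ref{p12} then follow quickly: the same summed estimate yields $\varepsilon\, d(u,v) \leq F(u) - \inf_U F \leq \varepsilon$ by the hypothesis on $u$, so $d(u,v) \leq 1$; and since $\{F(u_n)\}$ is nonincreasing, lower semicontinuity gives $F(v) \leq \liminf_n F(u_n) \leq F(u_0) = F(u)$.

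I expect property \ref{p13} to be the main obstacle, and I would reduce it to a minimality statement for the order $\preceq$. The first step is to show $v \preceq u_n$ for every $n$: for $m > n$ transitivity gives $u_m \preceq u_n$, that is $F(u_m) + \varepsilon\, d(u_m,u_n) \leq F(u_n)$, and letting $m \to \infty$ while using the lower semicontinuity of $F$ together with the continuity of $d$ yields $F(v) + \varepsilon\, d(v,u_n) \leq F(u_n)$. It then remains to prove that $v$ is minimal, namely that $w \preceq v$ forces $w = v$, which is exactly equivalent to \ref{p13}.

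To close the argument, suppose $w \preceq v$. Transitivity gives $w \preceq u_n$, so $w \in S_{u_n}$ and $\inf_{S_{u_n}} F \leq F(w)$; combined with the selection rule this forces $F(u_{n+1}) \leq F(w) + 2^{-n}$. On the other hand, since $w \preceq v \preceq u_{n+1}$, transitivity gives $F(w) + \varepsilon\, d(w,u_{n+1}) \leq F(u_{n+1}) \leq F(w) + 2^{-n}$, whence $\varepsilon\, d(w,u_{n+1}) \leq 2^{-n} \to 0$, so $u_{n+1} \to w$. Since $u_{n+1} \to v$, uniqueness of limits yields $w = v$. This minimality step is the crux, and it establishes \ref{p13} and completes the proof.
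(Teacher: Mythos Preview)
Your argument is correct and follows essentially the same route as the paper's: iteratively pick near-minimizers in the nested sets $S_n$, telescope to get a Cauchy sequence with limit $v$, and then show that any $w$ beating $v$ in the sense of item~\ref{p13} lies in every $S_n$ and must coincide with $v$. The only cosmetic differences are that the paper uses the halving selection rule $F(u_{n+1})\le\tfrac12\bigl(F(u_n)+\inf_{S_n}F\bigr)$ instead of your $\inf_{S_{u_n}}F+2^{-n}$, and phrases the final step as a contradiction rather than via the partial order; note, as a minor point of precision, that $\preceq$-minimality of $v$ is actually slightly \emph{stronger} than item~\ref{p13} (it gives strict inequality for $w\neq v$), not exactly equivalent, but the implication you need holds.
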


\section{The generalized Ekeland variational principle}

In this section we state and prove the following new result, which the proof is based on the one presented in \cite{906}.
\begin{thm}[Generalized Ekeland variational principle] Let $(U,d)$ be a complete metric space and let $F:U \rightarrow \overline{\mathbb{R}}$ be a lower semi continuous
bounded below functional taking a finite value at some point.

Let $\varepsilon>0$.  Assume for some $u \in U$ we have
$$F(u) \leq \inf_{u \in U}\{F(u)\}+ \varepsilon.$$
Under such hypotheses, there exists $v \in U$ such that
\begin{enumerate}
\item\label{p11} $d(u,v) \leq 1$,
\item\label{p12} $F(v) \leq F(u)$,
\item\label{p13} $F(v) \leq F(w)+ \varepsilon d(v,w),\; \forall w \in U$.
\item\label{p14} Assuming $U$ is a Banach space and $F$ is G\^{a}teaux differentiable, we have
$$\|\delta F(v)\|_{U^*} \leq \varepsilon.$$
\item\label{p15} Finally, assuming also $F$ is twice Fr\'{e}chet differentiable, we have
$$\delta^2F(v,\varphi, \varphi) \geq -4\;\varepsilon \|\varphi\|_U-2\frac{o(\varepsilon^2)}{\varepsilon^2},\; \forall \varphi \in U,$$
where $$\frac{o(\varepsilon^2)}{\varepsilon^2} \rightarrow 0, \text{ as } \varepsilon \rightarrow 0^+.$$
\end{enumerate}
\end{thm}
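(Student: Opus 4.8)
The plan is to obtain the point $v$ from the classical principle and then extract the two differential conclusions by perturbation. Since items \ref{p11}--\ref{p13} coincide verbatim with the classical Ekeland variational principle stated in the Introduction, I would simply invoke that theorem to produce $v \in U$ with $d(u,v)\le 1$, $F(v)\le F(u)$, and $F(v)\le F(w)+\varepsilon\,d(v,w)$ for all $w \in U$. From here on $U$ is a Banach space, so $d(v,w)=\|v-w\|_U$. For item \ref{p14}, fix $\varphi \in U$ and $t>0$ and apply item \ref{p13} with $w=v+t\varphi$, obtaining
$$F(v) \le F(v+t\varphi) + \varepsilon\, t\,\|\varphi\|_U.$$
Dividing by $t$ and letting $t\to 0^+$, G\^{a}teaux differentiability yields $\delta F(v)\varphi \ge -\varepsilon\|\varphi\|_U$; replacing $\varphi$ by $-\varphi$ gives the reverse bound, so $|\delta F(v)\varphi|\le \varepsilon\|\varphi\|_U$ for every $\varphi$, that is $\|\delta F(v)\|_{U^*}\le \varepsilon$.

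For item \ref{p15}, the idea is to push the same inequality to second order. Using that $F$ is twice Fr\'{e}chet differentiable, I would start from the Taylor expansion
$$F(v+h) = F(v) + \delta F(v)\,h + \tfrac12\,\delta^2 F(v,h,h) + o(\|h\|_U^2).$$
I would choose the perturbation $h=\varepsilon\varphi$ and feed it into item \ref{p13}, which gives $F(v)\le F(v+\varepsilon\varphi)+\varepsilon^2\|\varphi\|_U$. Substituting the expansion and using the homogeneity relations $\delta F(v)(\varepsilon\varphi)=\varepsilon\,\delta F(v)\varphi$ and $\delta^2F(v,\varepsilon\varphi,\varepsilon\varphi)=\varepsilon^2\,\delta^2F(v,\varphi,\varphi)$, one is left with
$$\varepsilon\,\delta F(v)\varphi + \tfrac{\varepsilon^2}{2}\,\delta^2F(v,\varphi,\varphi) + o(\varepsilon^2) \ge -\varepsilon^2\|\varphi\|_U.$$
Now the first-order term is controlled by item \ref{p14}, through $-\varepsilon\,\delta F(v)\varphi \le \varepsilon\,|\delta F(v)\varphi| \le \varepsilon^2\|\varphi\|_U$, and after isolating the quadratic form and dividing by $\varepsilon^2/2$ one reaches a lower bound for $\delta^2 F(v,\varphi,\varphi)$ of the announced shape, with remainder $\tfrac{o(\varepsilon^2)}{\varepsilon^2}\to 0$ as $\varepsilon\to 0^+$.

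The main obstacle I anticipate is bookkeeping the two scales cleanly and pinning down the constant. With the scaling $h=\varepsilon\varphi$ the Taylor remainder is $o(\|h\|_U^2)=o(\varepsilon^2\|\varphi\|_U^2)$, which is $o(\varepsilon^2)$ only for a fixed $\varphi$ as $\varepsilon\to 0^+$; this is exactly what produces the factor $\tfrac{o(\varepsilon^2)}{\varepsilon^2}$ in the conclusion. The linear cross-term $\varepsilon\,\delta F(v)\varphi$ must be absorbed using precisely the $O(\varepsilon)$ estimate from item \ref{p14}, and tracking the numerical coefficient in front of $\|\varphi\|_U$ (as well as the factor multiplying the remainder) is the step I would verify most carefully, since a different choice of perturbation scale redistributes a power of $\varepsilon$ between the $\|\varphi\|_U$ term and the remainder. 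This reconciliation of the coefficient with the stated $-4\,\varepsilon\|\varphi\|_U$ is the delicate point of the whole argument.
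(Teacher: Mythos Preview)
Your treatment of items \ref{p11}--\ref{p14} is essentially the paper's: for \ref{p11}--\ref{p13} you invoke the classical principle while the paper spells out the Cauchy-sequence construction, and for \ref{p14} the argument via $w=v+t\varphi$, $t\to 0^+$, is identical.

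The gap is in item \ref{p15}. Carrying your own computation through with the original $\varepsilon$ in items \ref{p13} and \ref{p14} gives
\[
\tfrac{\varepsilon^2}{2}\,\delta^2F(v,\varphi,\varphi)\ \ge\ -\varepsilon^2\|\varphi\|_U-\varepsilon\,|\langle\delta F(v),\varphi\rangle_U|-o(\varepsilon^2)\ \ge\ -2\varepsilon^2\|\varphi\|_U-o(\varepsilon^2),
\]
and after dividing by $\varepsilon^2/2$ the lower bound is $-4\|\varphi\|_U-2\,o(\varepsilon^2)/\varepsilon^2$, with \emph{no} factor $\varepsilon$ in front of $\|\varphi\|_U$. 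Rescaling the perturbation alone cannot repair this: with $h=t\varphi$ the same steps yield $\delta^2F(v,\varphi,\varphi)\ge -4(\varepsilon/t)\|\varphi\|_U-2\,o(t^2)/t^2$, and one cannot make $\varepsilon/t$ small while keeping $t$ of order $\varepsilon$ so that $o(t^2)/t^2\to 0$ as $\varepsilon\to 0^+$. The device you are missing, and which the paper uses explicitly, is to run items \ref{p13} and \ref{p14} \emph{with $\varepsilon^2$ in place of $\varepsilon$} (in the paper's words, ``with $\varepsilon^2$ replacing $\varepsilon$ in the previous items''). That upgrades the inputs to $F(v)\le F(w)+\varepsilon^2\,d(v,w)$ and $|\langle\delta F(v),\varphi\rangle_U|\le \varepsilon^2\|\varphi\|_U$, and then your very computation with $h=\varepsilon\varphi$ gives
\[
\tfrac12\,\delta^2F(v,\varphi,\varphi)\ \ge\ -\varepsilon\|\varphi\|_U-\tfrac{1}{\varepsilon}\,|\langle\delta F(v),\varphi\rangle_U|-\tfrac{o(\varepsilon^2)}{\varepsilon^2}\ \ge\ -2\varepsilon\|\varphi\|_U-\tfrac{o(\varepsilon^2)}{\varepsilon^2},
\]
which is the stated bound. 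You correctly flagged the coefficient as the delicate point; this $\varepsilon\mapsto\varepsilon^2$ substitution is precisely the missing ingredient.
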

\begin{proof} Define the sequence $\{u_n\} \subset U$ by:
$$u_1=u,$$
and having $u_1,...,u_n$, select $u_{n+1}$ as specified in the next lines. First, define
$$S_n=\{w \in U \;|\; F(w) \leq F(u_n)-\varepsilon d(u_n,w)\}.$$

Observe that $u_n \in S_n$ so that $S_n$ in non-empty.

On the other hand, from the definition of infimum, we may select $u_{n+1} \in S_n$ such that
\begin{equation}\label{sp51}F(u_{n+1}) \leq \frac{1}{2}\left\{F(u_n)+\inf_{w \in S_n}\{F(w)\}\right\}.\end{equation}

Since $u_{n+1} \in S_n$ we have
\begin{equation}\label{sp48} \varepsilon d(u_{n+1},u_n) \leq F(u_n)-F(u_{n+1}).\end{equation}
and hence
\begin{equation}\label{sp49} \varepsilon d(u_{n+m},u_n) \leq  \sum_{i=1}^m \varepsilon d(u_{n+i},u_{n+i-1}) \leq F(u_n)-F(u_{n+m}).\end{equation}

From (\ref{sp48}), $\{F(u_n)\}$ is a decreasing sequence bounded below by $\inf_{u \in U}F(u)$ so that there exists $ \alpha \in \mathbb{R}$ such that
$$F(u_n) \rightarrow \alpha \text{ as } n \rightarrow \infty.$$

From this and (\ref{sp49}), $\{u_n\}$ is a Cauchy sequence , converging to some $v \in U.$

Since $F$ is lower semi-continuous we get,
$$\alpha =\liminf_{m \rightarrow \infty}F(u_{n+m}) \geq F(v),$$
so that letting $m \rightarrow \infty$ in (\ref{sp49}) we obtain
\begin{equation}\label{sp50}\varepsilon d(u_n,v) \leq F(u_n)-F(v),\end{equation}
and, in particular for $n=1$ we get
$$0 \leq  \varepsilon d(u,v) \leq F(u)-F(v)\leq F(u)-\inf_{u \in U}F(u) \leq \varepsilon.$$
Thus, we have proven \ref{p11} and \ref{p12}.

Suppose, to obtain contradiction, that \ref{p13} does not hold.

Hence, there exists $w \in U$ such that $$F(w)< F(v)- \varepsilon d(w,v).$$

In particular we have \begin{equation}\label{pt} w \neq v.\end{equation}
Thus, from this and (\ref{sp50}) we have
$$F(w)< F(u_n)-\varepsilon(u_n,v)-\varepsilon d(w,v) \leq F(u_n)-\varepsilon d(u_n,w), \forall n \in \mathbb{N}.$$

Now observe that $w \in S_n, \forall n \in \mathbb{N}$ so that $$\inf_{w \in S_n}\{F(w)\} \leq F(w), \forall n \in \mathbb{N}.$$

From this and (\ref{sp51}) we obtain,

$$2F(u_{n+1})-F(u_n) \leq F(w)< F(v)- \varepsilon d(v,w),$$
so that
$$2\liminf_{n \rightarrow \infty}\{F(u_{n+1})\} \leq F(v)- \varepsilon d(v,w) +\liminf_{ n \rightarrow \infty}\{F(u_n)\}.$$
Hence,
$$F(v) \leq \liminf_{n \rightarrow \infty}\{F(u_{n+1})\} \leq F(v)- \varepsilon d(v,w),$$
so that
$$0 \leq- \varepsilon d(v,w),$$
which contradicts (\ref{pt}).

Thus \ref{p13} holds.

Assume now $U$ is a Banach space, $F$ is G\^{a}teaux differentiable and $\varphi \in U$. Fix $t \in (0,1)$.

Thus, from \ref{p13},
\begin{gather}
F(v)-F(v+t\varphi)\leq \varepsilon \|t\varphi\|_U,
\end{gather}
so that
\begin{gather}
\frac{F(v)-F(v+t\varphi)}{t}\leq \varepsilon \|\varphi\|_U,
\end{gather}

Therefore, letting $t \rightarrow 0^+$, we get
\begin{gather}
 -\langle \delta F(v),\varphi \rangle_U \leq \varepsilon \|\varphi\|_U.
\end{gather}
Similarly, for $t \in (0,1)$,
\begin{gather}
F(v)-F(v+t(-\varphi))\leq  \varepsilon\| t \varphi\|_U,
\end{gather}
so that,
 \begin{gather}
\frac{F(v)-F(v+t(-\varphi))}{t}\leq \varepsilon \|\varphi\|_U.
\end{gather}
Letting $t \rightarrow 0^+$, we obtain
\begin{gather}
 \langle \delta F(v),\varphi\rangle_U \leq \varepsilon \|\varphi\|_U,
\end{gather}
so that
\begin{gather}
 |\langle \delta F(v),\varphi\rangle_U |=\varepsilon \|\varphi\|_U,\; \forall \varphi \in U.
\end{gather}
Thus,
\begin{gather}
\|\delta F(v)\|_{U^*}\leq \varepsilon.
\end{gather}

Assume here, in addition, $F$ is twice Fr\'{e}chet differentiable in $U$. From \ref{p13}, with $\varepsilon^2$ replacing $\varepsilon$ in the previous items, we have
$$F(v+\varepsilon \varphi)-F(v) \geq -\varepsilon^2 \|\varepsilon \varphi\|_U,$$
so that
 from this and the twice Fr\'{e}chet differentiability hypothesis, we get
$$\varepsilon \langle \delta F(v), \varphi\rangle_U +\frac{1}{2}\varepsilon^2\delta^2F(v,\varphi,\varphi) +o(\varepsilon^2)\geq -\;\varepsilon^3 \| \varphi\|_U,$$
so that, from this and $$|\langle \delta F(v), \varphi\rangle_U| \leq \varepsilon^2 \|\varphi\|_U,$$ we obtain
\begin{eqnarray}\frac{1}{2}\delta^2F(v,\varphi,\varphi)  &\geq& -\;\varepsilon \| \varphi\|_U-\varepsilon\frac{|\langle \delta F(v), \varphi\rangle_U|}{\varepsilon^2}-\frac{o(\varepsilon^2)}{\varepsilon^2}
\nonumber \\ &\geq&-2\;\varepsilon \| \varphi\|_U-\frac{o(\varepsilon^2)}{\varepsilon^2}.\end{eqnarray}

Hence,
$$\delta^2F(v,\varphi, \varphi) \geq -4\;\varepsilon \|\varphi\|_U-2\frac{o(\varepsilon^2)}{\varepsilon^2},\; \forall \varphi \in U,$$
where
$$\frac{o(\varepsilon^2)}{\varepsilon^2} \rightarrow 0, \text{ as } \varepsilon \rightarrow 0^+.$$
The proof is complete.
\end{proof}
\begin{obe} We may introduce in $U$ a new metric given by $d_1=\varepsilon^{1/2}d.$
We highlight that the topology remains the same and also $F$ remains lower semi-continuous.
Under the hypotheses of the last theorem, for a not relabeled metric $d$,  if  $u \in U$ is such that $F(u) < \inf_{u \in U}F(u)+\varepsilon^2,$ then
there exists $v \in U$ such that
\begin{enumerate}
\item\label{p11.A} $d(u,v) \leq \varepsilon^{1/2}$,
\item\label{p12.A} $F(v) \leq F(u)$,
\item\label{p13.A} $F(v) \leq F(w)+ \varepsilon^{3/2} d(v,w),\; \forall w \in U$.
\item\label{p14.A} Assuming $U$ is a Banach space and $F$ is G\^{a}teaux differentiable, we have
$$\|\delta F(v)\|_{U^*} \leq \varepsilon^{3/2}.$$
\item\label{p15.A} Finally, assuming also $F$ is twice Fr\'{e}chet differentiable, we have
$$\delta^2F(v,\varphi, \varphi) \geq -4\;\varepsilon^{1/2} \|\varphi\|_U-2\frac{o(\varepsilon^2)}{\varepsilon^2},\; \forall \varphi \in U,$$
where $$\frac{o(\varepsilon^2)}{\varepsilon^2} \rightarrow 0, \text{ as } \varepsilon \rightarrow 0^+.$$
\end{enumerate}
\end{obe}

\end{document}